\newtheorem{theorem}{Theorem}
\newtheorem{corollary}{Corollary}
\newtheorem{lemma}{Lemma}
\newtheorem{definition}{Definition}
\newtheorem{assumption}{Assumption}
\theoremstyle{remark}
\newtheorem{remark}{Remark}
\def\F{\mathcal{F}}
\def\G{\mathcal{G}}
\def\E{\mathcal{E}}
\def\I{\mathcal{I}}
\def\bR{\mathbb{R}}
\def\bE{\mathbb{E}}
\def\bN{\mathbb{N}}
\def\bP{\mathbb{P}}
\def\bJ{\mathbb{J}}
\def\bS{\mathbb{S}}
\begin{document}
\title{Chaos representations for Marked Point Processes.}
\author{Samuel N. Cohen\\ samuel.cohen@maths.ox.ac.uk\\University of Oxford}

\date{\today}

\maketitle

\begin{abstract}
We show that for a large class of marked point processes there exists a random measure $m$ with the predictable representation property such that iterated integrals with respect to $m$ span the space of square integrable random variables.
\end{abstract}

\section{Introduction}
A fundamental result in the stochastic analysis on Wiener spaces is the Wiener-It\=o chaos representation theorem \cite{Wiener1938}. This theorem allows the representation of any square integrable random variable as the sum of iterated stochastic integrals with respect to the underlying Wiener process, and provides an approach to the Malliavin calculus of variations. Such a representation is often termed a chaos representation, and is closely linked to representations in terms of Hermite polynomials.

For processes with jumps, it is also possible to construct a theory of chaos expansions. This has been studied in the context of Markov chains in Kroeker  \cite{Kroeker1980} (see also Biane \cite{Biane1990}) and for the Binomial process in Privault and Schoutens \cite{Privault2002}.  In these works, the approach is based on the principle of finding an analogue to the Hermite polynomials appropriate to these spaces. Emery \cite{Emery1989} studies chaos representations in terms of iterated integrals, assuming that the underlying martingale satisfies a certain structure condition, related to the Az\'ema martingale. Many authors have since expanded on these ideas.

In this paper we give a general approach to chaos decompositions for an arbitrary marked point process, where we simply assume that the compensating measure for the underlying process is absolutely continuous (in both time and space) with respect to some (locally finite in time) deterministic measure. Instead of searching for a polynomial chaos interpretation, we focus on the representation in terms of iterated stochastic integrals with respect to fundamental martingales. For this, we use the fundamental martingales constructed in Elliott \cite{Elliott1976} and Davis \cite{Davis1976}, which make no assumptions about independence of the increments. In such a setting, we shall show that the iterated stochastic integrals span the entirety of $L^2(\F)$.

This result complements the construction of the Malliavin calculus for Marked Point Processes as in Decreusefond \cite[Section 4]{Decreusefond1998}. In \cite{Decreusefond1998} it is simply assumed that the space under consideration admits a chaos decomposition. The contribution of this paper is to show general conditions under which this is the case.

\section{Martingales for Marked Point Processes}
We begin by constructing an explicit martingale representation result. We do this mainly for copleteness, in Section \ref{sec:Chaos} we shall simply assume that some martingale representation is given, which may or may not be the one constructed here.

The setting for this analysis is taken from Elliott \cite{Elliott1976} (see also Davis \cite{Davis1976}, and Elliott \cite{Elliott1977}), we shall state relevant results without proof or further reference. For a simpler and more gentle introduction to this style of analysis for marked point processes, see \cite[Ch. 17]{Elliott1982}.

\subsection{The jump setting}\label{subsec:setting}
Let $(E, \E)$ be a Blackwell space. Consider a right-constant jump process $X$ taking values in $E$, which is initially in the fixed position $X_0 = \xi_0\in E$.

At a random time $T_1$, $X$ jumps to a random location $\xi_1\neq \xi_0$, at which it stays until a random time $T_2$, when it jumps to a random location $\xi_2\neq \xi_1$, etc... As $X$ is right-constant, we know that for each path, the jumps $T_i$ are well ordered, and there are at most countably many jumps. For simplicity, in this paper we shall assume that there are at most finitely many jumps on any compact, that is, $\lim_{n\to\infty} T_n = \infty$ for (almost) all paths.

We then have a measurable space $(\Omega, \F)$, where $\F = \sigma\{X_s, s<\infty\}$, and $\Omega \subset ([0,\infty]\times E)^\bN$ is a list of all the jump times and locations of $X$, with the property that $X$ can only jump once at each time, and must jump to a new location. We suppose a probability measure $\bP$ is given on this space.  We denote by $\F_t$ the $\bP$-completed $\sigma$-algebra generated by $X$ up to time $t$, that is $\F_t = \sigma\{X_s; s\leq t\}\vee\{\text{null sets}\}$. This space will be kept fixed throughout the paper.

\subsection{Fundamental martingales}
Suppose $T_\alpha$, $\alpha\in\bN$ is a jump time. The distribution of the pair $(T_{\alpha}, \xi_{\alpha})$ given $\F_{T_{\alpha-1}}$ is described by a random measure (that is, a regular family of conditional probability distributions) $\mu^{\alpha}(\omega; \cdot)$ on $[0,\infty]\times E$. Properties of $\mu^\alpha$ are given in \cite{Elliott1977}. Define
\[F^\alpha_t(\omega; A) = \mu^\alpha(\omega; ]t,\infty]\times A)\]
 so that, omitting $\omega$ for notational convenience,  $F_t^\alpha(A)$ is the conditional probability that $T_{\alpha} >t$ and $\xi_{\alpha} \in A$ given $\F_{T_{\alpha-1}}$. For convenience $F^\alpha_t:=F^\alpha_t(E)$ and we write
\[\lambda^\alpha(t, A)= \left.\frac{dF^\alpha_\cdot(A)}{dF^\alpha_\cdot(E)}\right|_t,\]
the rate at which the $\alpha$th jump is into $A$ at time $t$.  We can then define the stochastic processes
\[\begin{split}
   p^\alpha(t, A) &:= I_{t\geq T_{\alpha}} I_{\xi_{\alpha} \in A}\\
   \tilde p^\alpha(t, A) &:= -\int_{]0, t\wedge T_{\alpha}]}  (F_{u-}^{\alpha})^{-1} dF_u^{\alpha} (A) = -\int_{]0,t\wedge T_\alpha]}\lambda^\alpha(s,A) (F^\alpha_{s-})^{-1}dF_s^\alpha\\
   q^\alpha(t, A) &:= p^\alpha(t, A) - \tilde p^\alpha(t, A)
  \end{split}
\]
so that $q^\alpha(t, A)$ is an $\F_t$-martingale with predictable quadratic variation
 \[\langle q^\alpha(t, A)\rangle = \tilde p^\alpha(t, A) - \sum_{0< u\leq t\wedge T_{\alpha}} \frac{\lambda^\alpha(u, A)^2 (\Delta F^{\alpha}_u)^2}{(F^{\alpha}_{u-})^2}.\]
We shall see that these martingales provide a basis from which we can obtain a martingale representation theorem in these spaces. Note that $\tilde p^\alpha$ is simply the compensator of the finite variation process $p^\alpha$, and $q^\alpha$ is then the martingale part of $p^\alpha$. Note also that if $\tilde p^\alpha$ is continuous in $t$, then $\Lambda^\alpha(t, A)\Delta F^\alpha \equiv 0$, and $\langle q^\alpha(t, A)\rangle = \tilde p^\alpha(t,A)$.

Write $G^\alpha$ for the set of measurable functions $\{g^\alpha:\Omega\times[0,\infty]\times E\to \bR\}$ such that for each $(t,x)\in [0,T]\times E$, $g^\alpha$ is $\F_{T_{\alpha-1}}$-measurable. As for fixed $\alpha, t$ and $\omega$ we know $p^\alpha(t, A)$ and $\tilde p^\alpha(t,A)$ are both countably additive in $A$, for suitable $g^\alpha \in G^\alpha$ we have
\[\begin{split}
   \int_{\Omega} g^\alpha(s, x) p^\alpha(ds, dx) &= g^\alpha(T_{\alpha}, x_{\alpha})\\
   \int_{\Omega} g^\alpha(s, x) \tilde p^\alpha(ds, dx) &= - \int_{]0, T_{\alpha+1}]} \int_{E} g^\alpha(s,x) \lambda^\alpha(\omega; s, dx) \frac{ dF^\alpha_s}{F^\alpha_{s-}}
  \end{split}
\]

\begin{lemma}\label{lem:Msumofdeltas}
 For any square-integrable martingale $M$, we define
\[\Delta M^\alpha := M_{T_\alpha} - M_{T_{\alpha-1}}.\]
Then for every $\alpha\in \bN$, we have
\[M_{T_\alpha} = \sum_{\beta\leq \alpha} \Delta M^\beta.\]
\end{lemma}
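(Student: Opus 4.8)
The plan is to prove this by a straightforward telescoping argument; the only points that need care are that each quantity $M_{T_\alpha}$ is well defined and the identification of the base point of the telescope.

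First I would record that $M_{T_\alpha}$ is meaningful for every $\alpha\in\bN$. Since $M$ is a square-integrable martingale it is in particular uniformly integrable, hence closed by some $M_\infty\in L^2(\F)$, and the standing assumption $\lim_{n\to\infty}T_n=\infty$ guarantees that each jump time $T_\alpha$ is an almost surely finite stopping time. Optional stopping for uniformly integrable martingales then gives $M_{T_\alpha}=\bE[M_\infty\mid\F_{T_\alpha}]$, which lies in $L^2$; consequently every increment $\Delta M^\beta=M_{T_\beta}-M_{T_{\beta-1}}$ is a well-defined element of $L^2(\F)$, and the finite sum $\sum_{\beta\le\alpha}\Delta M^\beta$ raises no convergence issue.

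The remaining step is the telescope itself:
\[\sum_{\beta\le\alpha}\Delta M^\beta=\sum_{\beta=1}^{\alpha}\big(M_{T_\beta}-M_{T_{\beta-1}}\big)=M_{T_\alpha}-M_{T_0},\]
all intermediate terms cancelling. Here $T_0=0$, and since $\F_0$ is generated (up to $\bP$-null sets) by the deterministic starting point $X_0=\xi_0$, the variable $M_0$ is almost surely constant; under the convention in force throughout this section that the martingales under consideration are null at the origin, $M_{T_0}=M_0=0$, and the claim follows. (If one prefers not to normalise, the same computation delivers $M_{T_\alpha}=M_0+\sum_{\beta\le\alpha}\Delta M^\beta$, i.e.\ the assertion for the centred martingale $M-M_0$.)

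I do not expect a genuine obstacle: the argument is elementary, and the only substantive ingredient is the appeal to optional stopping to keep $M$, sampled at the stopping times $T_\alpha$, inside $L^2$ — which is exactly where square integrability of $M$ is used.
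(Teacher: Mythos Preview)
The paper does not actually prove this lemma: it is one of the results imported from Elliott \cite{Elliott1976} and, as announced at the start of the section, ``stated without proof or further reference.'' Your telescoping argument is the natural and correct one, and the appeal to uniform integrability to make sense of $M_{T_\alpha}$ is the right bookkeeping.

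One small correction: the ``convention in force throughout this section that the martingales under consideration are null at the origin'' that you cite is not stated anywhere in the paper. In fact, when the lemma is invoked in the proof of the martingale representation theorem, the paper writes $M_t-M_0=\sum_\alpha N^\alpha_t$, i.e.\ it uses precisely your parenthetical version with the $M_0$ retained. Since $\F_0$ is trivial (being generated, up to null sets, by the deterministic initial position $X_0=\xi_0$), $M_0$ is an a.s.\ constant and the discrepancy is cosmetic; but you should not attribute a normalisation to the paper that it does not make.
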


This leads to a precursor to the martingale representation result in this context.
\begin{theorem}\label{thm:martdiffrepMPP}
Suppose $M$ is a square-integrable martingale; write
\[N^\alpha_t = M_{T_{\alpha}\wedge t} - M_{T_{\alpha-1} \wedge t}\]
Then for each $\alpha\in\bN$, there exists a function $g^\alpha\in G^\alpha$ such that
\[N^\alpha_t = \int_{]0,t]\times E} g^\alpha(s, x) q^\alpha(ds, dx) \quad a.s.\]
We shall say that $g^\alpha$ represents $N^\alpha$. Furthermore, $g^\alpha(T_\alpha, x_\alpha) = \Delta N^\alpha_{T_\alpha}$, up to the addition of a $\F_{T_{\alpha-1}}$-measurable random variable.
\end{theorem}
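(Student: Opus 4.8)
\noindent\emph{Proof strategy.}
The representation has to be produced on the stochastic interval $]T_{\alpha-1},T_\alpha]$: indeed $N^\alpha\equiv 0$ on $[0,T_{\alpha-1}]$ and $N^\alpha$ is constant on $[T_\alpha,\infty)$, while $q^\alpha\equiv 0$ on $[0,T_{\alpha-1}]$ (there $dF^\alpha\equiv 0$). On that interval $X$ is frozen at $\xi_{\alpha-1}$ and the only information revealed beyond $\F_{T_{\alpha-1}}$ is the value of $(T_\alpha,\xi_\alpha)$, whose conditional law given $\F_{T_{\alpha-1}}$ is $\mu^\alpha$; so the plan is to work conditionally on $\F_{T_{\alpha-1}}$, treating $\mu^\alpha,F^\alpha,\lambda^\alpha$ as given deterministic data, and to solve the resulting ``one jump'' representation problem.

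The first step is to realise $M_{T_\alpha}$ as a function of the jump. Since $(E,\E)$ is Blackwell and $\F_{T_\alpha}=\F_{T_{\alpha-1}}\vee\sigma(T_\alpha,\xi_\alpha)$, there is a jointly measurable $h^\alpha$ with $h^\alpha(\cdot\,;t,x)$ being $\F_{T_{\alpha-1}}$-measurable for each $(t,x)$ and $M_{T_\alpha}=h^\alpha(T_\alpha,\xi_\alpha)$ a.s.; by square integrability of $M$ we may take $h^\alpha$ square integrable against $\mu^\alpha$. I would then introduce the conditional value process
\[
R^\alpha_t:=\frac{1}{F^\alpha_t}\int_{]t,\infty]\times E}h^\alpha(s,x)\,\mu^\alpha(ds,dx),
\]
which given $\F_{T_{\alpha-1}}$ is an explicit right-continuous function of $t$, and note, by optional sampling applied to $M$, that $R^\alpha_t=\bE[M_{T_\alpha}\mid\F_t]=M_t$ on $\{T_{\alpha-1}\le t<T_\alpha\}$, so that $R^\alpha_{T_{\alpha-1}}=M_{T_{\alpha-1}}$, $R^\alpha_{T_\alpha-}=M_{T_\alpha-}$, and $N^\alpha_t=R^\alpha_t-M_{T_{\alpha-1}}$ for $T_{\alpha-1}\le t<T_\alpha$.

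The candidate integrand is
\[
g^\alpha(\omega;t,x):=h^\alpha(\omega;t,x)-R^\alpha_{t-}(\omega)-\frac{\Delta F^\alpha_t(\omega)}{F^\alpha_t(\omega)}\Big(\int_E h^\alpha(\omega;t,y)\,\lambda^\alpha(\omega;t,dy)-R^\alpha_{t-}(\omega)\Big),
\]
the last term being present only at the at most countably many atoms of $F^\alpha$; by construction $g^\alpha\in G^\alpha$. To verify the representation I would split $q^\alpha=p^\alpha-\tilde p^\alpha$ and use the two explicit integral formulas recorded just before Lemma~\ref{lem:Msumofdeltas} to obtain, for every $t$,
\[
\int_{]0,t]\times E}g^\alpha\,dq^\alpha=g^\alpha(T_\alpha,\xi_\alpha)\,I_{t\ge T_\alpha}+\int_{]0,t\wedge T_\alpha]}\int_E g^\alpha(s,x)\,\lambda^\alpha(s,dx)\,(F^\alpha_{s-})^{-1}\,dF^\alpha_s .
\]
For $t<T_\alpha$ the right-hand side is a pure Stieltjes integral in $s$; differentiating the identity $F^\alpha_t R^\alpha_t=\int_{]t,\infty]\times E}h^\alpha\,d\mu^\alpha$ by the product rule, and using $\mu^\alpha(\{s\}\times dx)=-\lambda^\alpha(s,dx)\Delta F^\alpha_s$ together with $dF^\alpha_s(A)=\lambda^\alpha(s,A)\,dF^\alpha_s$, shows that the integral collapses to $R^\alpha_t-R^\alpha_{T_{\alpha-1}}=N^\alpha_t$, the atomic term in $g^\alpha$ being exactly what makes the jump at an atom of $F^\alpha$ equal $\Delta R^\alpha_s$. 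Letting $t\uparrow T_\alpha$ and then evaluating at $t=T_\alpha$ (where a further atom of $F^\alpha$ may contribute) extends the identity to all $t$; comparing the jumps of the two sides at $T_\alpha$ reads off $g^\alpha(T_\alpha,\xi_\alpha)=M_{T_\alpha}-R^\alpha_{T_\alpha}=\Delta N^\alpha_{T_\alpha}-\Delta R^\alpha_{T_\alpha}$, i.e.\ $g^\alpha(T_\alpha,\xi_\alpha)$ agrees with $\Delta N^\alpha_{T_\alpha}$ up to the addition of the correction $\Delta R^\alpha_{T_\alpha}$, which is carried by $\F_{T_{\alpha-1}}$ and vanishes whenever $\tilde p^\alpha$ is continuous at $T_\alpha$ (in particular under the hypothesis of Section~\ref{sec:Chaos}). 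Finally I would confirm $\bE\big[\int (g^\alpha)^2\,d\langle q^\alpha\rangle\big]<\infty$ from $N^\alpha\in L^2$ and the isometry for $q^\alpha$, so that the stochastic integral is a genuine square-integrable martingale.

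I expect the main obstacle to be the bookkeeping at the atoms of $F^\alpha$ (equivalently of $\tilde p^\alpha$): away from atoms the computation is the routine product rule, but at an atom every jump term must be retained both in the integration by parts for $R^\alpha$ and in $\Delta(\int g^\alpha\,dq^\alpha)$, and it is precisely this that forces the correction term in $g^\alpha$ and the $\F_{T_{\alpha-1}}$-measurable ambiguity in the final identity. The one conceptual input, the measurable selection producing $h^\alpha$ with the right $\F_{T_{\alpha-1}}$-measurability, is where the Blackwell hypothesis enters and would be imported from Elliott~\cite{Elliott1976,Elliott1977} rather than reproved.
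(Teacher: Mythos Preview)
The paper does not actually prove this theorem: as announced at the start of Section~2, the results of that section (including Theorem~\ref{thm:martdiffrepMPP}) are imported from Elliott~\cite{Elliott1976,Elliott1977} and Davis~\cite{Davis1976} and ``stated without proof or further reference.'' So there is no in-paper argument to compare against.

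That said, your outline is precisely the classical Elliott--Davis construction that the paper is citing: realise $M_{T_\alpha}$ as an $\F_{T_{\alpha-1}}$-measurable function $h^\alpha$ of the jump $(T_\alpha,\xi_\alpha)$ via the Blackwell hypothesis, form the conditional-value process $R^\alpha_t=(F^\alpha_t)^{-1}\int_{]t,\infty]\times E}h^\alpha\,d\mu^\alpha$, take $g^\alpha$ to be $h^\alpha-R^\alpha_{-}$ corrected at the atoms of $F^\alpha$, and verify the identity by the product rule for $F^\alpha R^\alpha$. Your identification of the atomic bookkeeping as the only delicate point is accurate, and your acknowledgement that the measurable-selection step is where Blackwell enters (and should be quoted rather than reproved) matches the literature.

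One small caution on the final clause. You write that $g^\alpha(T_\alpha,\xi_\alpha)$ differs from $\Delta N^\alpha_{T_\alpha}$ by $\Delta R^\alpha_{T_\alpha}$, ``which is carried by $\F_{T_{\alpha-1}}$.'' Strictly, $\Delta R^\alpha_{T_\alpha}$ is an $\F_{T_{\alpha-1}}$-measurable \emph{function} evaluated at the random time $T_\alpha$; it is not itself $\F_{T_{\alpha-1}}$-measurable as a random variable. The statement in the theorem should be read as: the representing function $g^\alpha(t,x)$ is determined only up to addition of a function in $G^\alpha$ that integrates to zero against $q^\alpha$ (in particular, functions of $t$ alone that are $\F_{T_{\alpha-1}}$-measurable), and with this freedom one can always arrange $g^\alpha(T_\alpha,\xi_\alpha)=\Delta N^\alpha_{T_\alpha}$. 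Your computation gives exactly the right correction to exhibit this; just phrase the conclusion in terms of the non-uniqueness of $g^\alpha$ rather than claiming $\Delta R^\alpha_{T_\alpha}$ is literally $\F_{T_{\alpha-1}}$-measurable.
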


\section{Martingale representation theorem}
We now depart from the presentation of the martingale representation theorem in \cite{Elliott1977}, to present a slight variant which more naturally leads to the chaos representation, and is of a more familiar form. Our presentation depends on the following lemma and associated definition.

\begin{lemma}
For each $\omega$, any $t$, any $A\in\E$,  $p^\alpha(\omega;t, A)$ and $\tilde p^\alpha(\omega;t, A)$ vary in $t$ only on the set $t\in]T_{\alpha-1}, T_{\alpha}]$. In particular, the measures $\{dp^\alpha\}_{\alpha\in\bJ}$  on $[0,\infty]\times E$ have disjoint supports, and similarly for $\{d\tilde p^\alpha\}_{\alpha\in\bJ}$.

Therefore, we can define the disjoint sum
\[p(\omega; dt, dx) = \sum_{\alpha\in\bN} p^\alpha(\omega; dt, dx),\]
and similarly for $\tilde p$ and hence for $q=p-\tilde p$. 
\end{lemma}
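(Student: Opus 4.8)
The plan is to prove the two localisation claims separately --- first for $p^\alpha$, then for $\tilde p^\alpha$ --- then deduce the disjoint-support assertion from the observation that the half-open intervals $]T_{\alpha-1},T_\alpha]$ (with the convention $T_0=0$) are pairwise disjoint, and finally use the standing non-explosion hypothesis $\lim_n T_n=\infty$ to see that on each bounded time interval only finitely many of the $p^\alpha$ are non-trivial, so that the countable sum defines a genuine random measure, finite in time on compacts.

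For $p^\alpha$ the claim is read off the defining formula $p^\alpha(t,A)=I_{t\geq T_\alpha}I_{\xi_\alpha\in A}$: as a function of $t$ this equals $0$ on $[0,T_\alpha[$ and $I_{\xi_\alpha\in A}$ on $[T_\alpha,\infty]$, so the associated Stieltjes measure $dp^\alpha(\cdot,A)$ is at most a unit point mass at $t=T_\alpha$. Since $T_\alpha>T_{\alpha-1}$ whenever the $\alpha$th jump actually occurs (and $p^\alpha\equiv0$ when $T_\alpha=\infty$), this point lies in $]T_{\alpha-1},T_\alpha]$, so $dp^\alpha$ is carried by $]T_{\alpha-1},T_\alpha]\times E$.

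For $\tilde p^\alpha$ I would use the basic property of the regular conditional law recorded in \cite{Elliott1977}, namely that $\mu^\alpha(\omega;\cdot)$ is carried by $]T_{\alpha-1}(\omega),\infty]\times E$ (the next jump occurs strictly after the previous one). It follows that $F^\alpha_t(A)=\mu^\alpha(]t,\infty]\times A)$ is constant in $t$ on $[0,T_{\alpha-1}]$ with no atom at $T_{\alpha-1}$, so the Stieltjes measure $dF^\alpha_\cdot(A)$ charges no subset of $]0,T_{\alpha-1}]$. Substituting into $\tilde p^\alpha(t,A)=-\int_{]0,t\wedge T_\alpha]}\lambda^\alpha(s,A)(F^\alpha_{s-})^{-1}\,dF^\alpha_s$, the integrating measure vanishes on $]0,T_{\alpha-1}]$ and the upper limit is frozen at $T_\alpha$; hence $\tilde p^\alpha(\cdot,A)$ is identically $0$ on $[0,T_{\alpha-1}]$ and constant on $[T_\alpha,\infty]$, so it varies in $t$ only on $]T_{\alpha-1},T_\alpha]$ and $d\tilde p^\alpha$ is carried by $]T_{\alpha-1},T_\alpha]\times E$. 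I expect this to be the only step requiring care: one must invoke the right structural property of $\mu^\alpha$ and keep track of the degenerate case $T_\alpha=\infty$, where both processes vanish identically and the interval $]T_{\alpha-1},T_\alpha]$ is empty, so nothing is lost.

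Disjointness is then immediate, because the intervals $]T_{\alpha-1},T_\alpha]$, $\alpha\in\bN$, are pairwise disjoint: the supports of the $dp^\alpha$ are pairwise disjoint, likewise those of the $d\tilde p^\alpha$, and hence also those of the $dq^\alpha=dp^\alpha-d\tilde p^\alpha$. Finally, for fixed $\omega$ and $t<\infty$ only the indices $\alpha$ with $T_{\alpha-1}<t$ can contribute mass to $[0,t]\times E$, and there are finitely many of these by $\lim_n T_n=\infty$; so $p(\omega;\cdot)=\sum_{\alpha\in\bN}p^\alpha(\omega;\cdot)$ is a well-defined random measure on $[0,\infty]\times E$, finite on $[0,t]\times E$ for each $t<\infty$ (indeed it is just the jump measure $\sum_i\delta_{(T_i,\xi_i)}$ of $X$), and the same reasoning gives $\tilde p=\sum_\alpha\tilde p^\alpha$ and $q=p-\tilde p$.
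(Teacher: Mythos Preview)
Your argument is correct and complete. The paper itself states this lemma without proof, treating it as immediate from the definitions of $p^\alpha$ and $\tilde p^\alpha$ together with the structural facts about $\mu^\alpha$ recorded in \cite{Elliott1976, Elliott1977}; your write-up simply makes those implicit steps explicit, and does so accurately.
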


\begin{corollary}\label{cor:martdiffrepMPPzeroed}
 Let $g^\alpha$ be as in Theorem \ref{thm:martdiffrepMPP}. Let $\tilde g^\alpha$ be defined as
\[\tilde g^\alpha(t,x) = I_{\{t\in]T_{\alpha-1}, T_\alpha]\}} g^\alpha(t,x)\]
Then $\tilde g^\alpha$ also represents $N^\alpha$
\end{corollary}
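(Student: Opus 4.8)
The plan is to read the statement off directly from the support property established in the preceding Lemma. That Lemma tells us that for (almost) every $\omega$ the signed measure $q^\alpha(\omega;dt,dx)=p^\alpha(\omega;dt,dx)-\tilde p^\alpha(\omega;dt,dx)$ on $[0,\infty]\times E$ is carried by the stochastic interval $]T_{\alpha-1}(\omega),T_\alpha(\omega)]\times E$; in total-variation terms, $|q^\alpha(\omega;\cdot)|$ assigns zero mass to every Borel set disjoint from that interval. I would first record that $q^\alpha$ is genuinely a finite-variation process with integrable total variation --- it is the martingale part of the single unit-jump process $p^\alpha$, and $\bE[\tilde p^\alpha(\infty,E)]=\bE[p^\alpha(\infty,E)]=1$ --- so that the stochastic integral against $q^\alpha$ may be computed $\omega$-by-$\omega$ as an ordinary Lebesgue--Stieltjes integral against $q^\alpha(\omega;\cdot)$.

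The key step is then the one-line observation that, since $\tilde g^\alpha=I_{]T_{\alpha-1},T_\alpha]}\,g^\alpha$ differs from $g^\alpha$ only off the carrier of $q^\alpha(\omega;\cdot)$,
\[
\int_{]0,t]\times E}\tilde g^\alpha(s,x)\,q^\alpha(ds,dx)=\int_{]0,t]\times E} g^\alpha(s,x)\,q^\alpha(ds,dx)=N^\alpha_t\quad\text{a.s.},
\]
where the second equality is Theorem \ref{thm:martdiffrepMPP}; to be fully rigorous one applies this after splitting $q^\alpha(\omega;\cdot)$ into its positive and negative parts via the Hahn decomposition. One should also check that $\tilde g^\alpha$ is an admissible integrand: $]T_{\alpha-1},T_\alpha]$, being of the form $]S,T]$ for stopping times $S\le T$, is a predictable set, so $\tilde g^\alpha$ is predictable, and $|\tilde g^\alpha|\le|g^\alpha|$ preserves the integrability needed in Theorem \ref{thm:martdiffrepMPP}.

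I do not expect a real obstacle here; the only delicate point is the passage between the stochastic integral and the pathwise Stieltjes integral, which is exactly why it is worth recording that $q^\alpha$ has (integrable) finite variation. If one preferred to avoid that identification, an alternative route is through uniqueness of the representing integrand: from the explicit formula for $\langle q^\alpha(t,A)\rangle$ one sees that $\langle q^\alpha\rangle$ varies only on $]T_{\alpha-1},T_\alpha]$, hence $g^\alpha$ and $\tilde g^\alpha$ agree $d\langle q^\alpha\rangle\otimes d\bP$-almost everywhere, and therefore define the same stochastic integral.
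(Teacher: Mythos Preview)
Your proposal is correct and is essentially the paper's own argument: the paper's proof is the single sentence ``This follows as we have only modified $g^\alpha$ off the support of $q^\alpha$,'' which is exactly your key step, and your additional remarks on finite variation, predictability, and the $d\langle q^\alpha\rangle$-a.e.\ alternative simply flesh out why that sentence is justified.
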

\begin{proof}
 This follows as we have only modified $g^\alpha$ off the support of $q^\alpha$.
\end{proof}

We can now state our first martingale representation theorem.
\begin{theorem}
 Let $M$ be a square-integrable $\{\F_t\}$-martingale. Then there exists an $\{\F_t\}$-predictable process $g(t, x)$ such that
\[M_t = M_0 + \int_{]0,t]\times E} g(t,x) q(dt, dx).\]
\end{theorem}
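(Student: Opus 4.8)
The plan is to build the predictable integrand by patching together the increment representations of Theorem \ref{thm:martdiffrepMPP} along the stochastic intervals $]T_{\alpha-1},T_\alpha]$, exploiting the disjointness of the supports of the $q^\alpha$ just established. Concretely, for each $\alpha\in\bN$ I would fix a function $g^\alpha\in G^\alpha$ representing $N^\alpha$ as in Theorem \ref{thm:martdiffrepMPP}, pass to its truncation $\tilde g^\alpha(t,x)=I_{\{t\in]T_{\alpha-1},T_\alpha]\}}g^\alpha(t,x)$ as in Corollary \ref{cor:martdiffrepMPPzeroed}, and set
\[g(t,x):=\sum_{\alpha\in\bN}\tilde g^\alpha(t,x).\]
Since the intervals $]T_{\alpha-1},T_\alpha]$ partition $]0,\infty[$, at every $(\omega,t)$ at most one summand is non-zero, so $g$ is a genuine pointwise-defined process; one then checks it is predictable, which is the first place care is needed, as $G^\alpha$ records only $\F_{T_{\alpha-1}}$-measurability in $\omega$ for fixed $(t,x)$. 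Here the factor $I_{]T_{\alpha-1},T_\alpha]}$ is a predictable stochastic interval and on it the coefficient is $\F_{T_{\alpha-1}}$-measurable, so each $\tilde g^\alpha$, and hence $g$, is predictable.

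Next I would integrate term by term. By the preceding lemma the $q^\alpha$ have pairwise disjoint supports, $q=\sum_\alpha q^\alpha$, and $q$ coincides with $q^\alpha$ on $]T_{\alpha-1},T_\alpha]\times E$; since $\tilde g^\alpha$ is supported there, $\int_{]0,t]\times E}\tilde g^\alpha\,dq=\int_{]0,t]\times E}\tilde g^\alpha\,dq^\alpha=N^\alpha_t$ almost surely. To add these I need $L^2$-control: the increments $\Delta M^\alpha=N^\alpha_\infty$ are pairwise orthogonal in $L^2$, since for $\alpha<\beta$ optional stopping at $T_{\beta-1}\ge T_\alpha$ gives $\bE[\Delta M^\beta\mid\F_{T_\alpha}]=0$, so by Lemma \ref{lem:Msumofdeltas} together with $L^2$-convergence $M_{T_n}\to M_\infty$ one obtains
\[\sum_{\alpha\in\bN}\bE\big[(N^\alpha_\infty)^2\big]=\lim_{n\to\infty}\bE\big[(M_{T_n}-M_0)^2\big]=\bE\big[(M_\infty-M_0)^2\big]<\infty.\]
Because the $\langle q^\alpha\rangle$ also have disjoint supports, $\langle q\rangle=\sum_\alpha\langle q^\alpha\rangle$, whence $\bE\big[\int g^2\,d\langle q\rangle\big]=\sum_\alpha\bE[(N^\alpha_\infty)^2]<\infty$; thus $g$ is $q$-integrable and $\int_{]0,\cdot]\times E}g\,dq=\sum_\alpha N^\alpha$ as an $L^2$-convergent series of martingales.

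Finally I would telescope: for fixed $t$ one has $\sum_{\alpha=1}^n N^\alpha_t=M_{T_n\wedge t}-M_{T_0\wedge t}=M_{T_n\wedge t}-M_0$, and since $T_n\to\infty$ almost surely, $T_n\wedge t=t$ for all large $n$, so $\sum_{\alpha\in\bN}N^\alpha_t=M_t-M_0$ a.s.; combined with the previous step this gives $M_t=M_0+\int_{]0,t]\times E}g(t,x)\,q(dt,dx)$. If ``square-integrable martingale'' is meant only locally in $t$, I would run the same argument after stopping at a localising sequence and pass to the limit.

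The main obstacle I anticipate is not the telescoping or the summability, both of which are essentially immediate from Lemma \ref{lem:Msumofdeltas} and $T_n\to\infty$, but the measure-theoretic bookkeeping in the middle step: confirming that $g$ is predictable, that the disjoint-support structure genuinely identifies $\int\tilde g^\alpha\,dq$ with $N^\alpha$ and permits summation, and that the resulting $L^2$ series is precisely the stochastic integral of $g$ with respect to $q$ rather than merely some martingale agreeing with it.
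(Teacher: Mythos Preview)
Your proposal is correct and follows essentially the same route as the paper: define $g=\sum_\alpha \tilde g^\alpha$ as a disjoint sum, identify $\int\tilde g^\alpha\,dq$ with $N^\alpha$, and telescope via Lemma \ref{lem:Msumofdeltas}. The paper's version is shorter only because it bypasses your $L^2$-summability argument by observing that, for fixed $t$, the standing assumption $T_n\to\infty$ makes the sum $\sum_\alpha N^\alpha_t$ finite and the exchange of summation and integration immediate.
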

\begin{proof}
Let $\tilde g^\alpha$ be as in Corollary \ref{cor:martdiffrepMPPzeroed}. By Lemma \ref{lem:Msumofdeltas} and the fact $\tilde g^\alpha$ represents $N^\alpha$, we have
\[M_t-M_0 = \sum_{\alpha} N^\alpha_t  = \sum_{\alpha}\int_{]0,t]\times E} \tilde g^\alpha(s,x) q(ds, dx).\]
We then define $g(s, x) := \sum_{\alpha\leq\gamma} \tilde g^\alpha(s, x)$, this again being a disjoint sum. As there are almost surely finitely many jumps up to time $t$, and $\tilde g^\alpha$ is zero for $\alpha$ greater than the index of the next jump, for almost all $\omega$ this is a finite sum, and so we can exchange the order of integration and summation.
\end{proof}

This martingale representation theorem has a simple interpretation, as it is based purely on the compensated indicator functions of the state of the underlying process $X$. However, it has a significant flaw for our purposes, as iterated integrals are not necessarily orthogonal. For this reason, we need to rescale $q$, for which we need the following assumption. This assumption poses the only restriction on the processes we shall consider.

\begin{assumption}\label{Ass1}
 For all $\alpha$, there exists a \emph{deterministic} measure $\zeta^\alpha$ on $\bR^+\times E$ such that  $\tilde p^\alpha(\omega, \cdot, \cdot)$ is almost surely equivalent to $\zeta^\alpha$, and such that $\zeta^\alpha([0,t]\times E)<\infty$ for all $t<\infty$. For simplicity, we shall assume that $\zeta^\alpha$ is continuous with respect to $t$.
\end{assumption}

\begin{lemma}
 There exists a predictable function $\psi:\Omega \times \bR^+\times E \to ]0,1]$ such that for all measurable functions $f$, for all $\alpha\in \bJ$,
\[\begin{split}
   &\bE\left[\int_{]T_\alpha,T_{\alpha+1}]\times E} f(\omega, s,x) \psi(\omega, s,x) \tilde p(\omega, ds, dx)\right]\\
&= \int_{\bR^+\times E}\bE[I_{s\in]T_\alpha,T_{\alpha+1}]}f(\omega, s, x)] \zeta^\alpha(du,dx)
  \end{split}
\]
\end{lemma}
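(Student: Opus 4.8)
The plan is to construct $\psi$ piece by piece on each stochastic interval $]T_\alpha, T_{\alpha+1}]$, using the Radon--Nikodym derivative between $\tilde p^{\alpha+1}$ and its deterministic equivalent $\zeta^{\alpha+1}$ supplied by Assumption \ref{Ass1}, and then to renormalize so that the resulting function takes values in $]0,1]$. More precisely, on $]T_\alpha, T_{\alpha+1}]\times E$ the measure $\tilde p(\omega;\cdot,\cdot)$ coincides with $\tilde p^{\alpha+1}(\omega;\cdot,\cdot)$ (by the disjoint-support lemma), which is $\bP$-a.s.\ equivalent to $\zeta^{\alpha+1}$. Write $\phi^{\alpha+1}(\omega;s,x)$ for a predictable version of $d\zeta^{\alpha+1}/d\tilde p^{\alpha+1}$ restricted to that interval; predictability can be arranged because the density depends on $\omega$ only through $\F_{T_\alpha}$ (the conditional law of the $(\alpha+1)$st jump) together with the deterministic kernel $\zeta^{\alpha+1}$. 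Then formally
\[
\bE\!\left[\int_{]T_\alpha,T_{\alpha+1}]\times E} f(\omega,s,x)\,\phi^{\alpha+1}(\omega,s,x)\,\tilde p(\omega,ds,dx)\right]
=\bE\!\left[\int_{]T_\alpha,T_{\alpha+1}]\times E} f(\omega,s,x)\,\zeta^{\alpha+1}(ds,dx)\right],
\]
and a Fubini argument (justified first for nonnegative $f$, then by the usual extension) moves the expectation inside, giving exactly the right-hand side of the claimed identity with $\zeta^\alpha$ replaced by $\zeta^{\alpha+1}$. (I suspect the $\zeta^\alpha$ versus $\zeta^{\alpha+1}$ discrepancy in the statement is a minor indexing slip; I would flag it and proceed with whichever indexing makes the disjoint-support bookkeeping consistent.)

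The second step is to splice these pieces together. Define, for $(\omega,s,x)$ with $s\in]T_\alpha,T_{\alpha+1}]$, the function $\Phi(\omega,s,x):=\phi^{\alpha+1}(\omega,s,x)$; by the disjoint-support lemma the intervals $]T_\alpha,T_{\alpha+1}]$ partition $]0,\infty[$ for each $\omega$, so $\Phi$ is well defined, and it is predictable since each restriction is predictable and the partition is generated by the stopping times $T_\alpha$. Summing the per-interval identities over $\alpha$ (a finite sum up to any finite time horizon, by the assumption $T_n\to\infty$) yields the displayed identity for $\Phi$ in place of $\psi$, now without the localizing indicator inside the expectation, or rather with it automatically enforced.

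The only remaining issue is the range constraint $\psi\in]0,1]$: the density $\Phi$ need not be bounded. I would fix this by a strictly positive predictable reweighting, e.g.\ replace $\Phi$ by $\psi:=\Phi/(1+\Phi)\cdot c$ — but that breaks the identity, so instead the cleaner route is to absorb the constraint differently: observe that the lemma as used later only needs \emph{some} equivalent deterministic control measure, so one may simply replace each $\zeta^\alpha$ by the finite measure $\hat\zeta^\alpha(ds,dx):=\psi^{\alpha}... $ — concretely, set $\psi:=\min\{\Phi,1\}$ on the set $\{\Phi>0\}$ (which is all of the support of $\tilde p$, by equivalence) and redefine the target measure on each interval to be $\psi\,\tilde p$ pushed forward; since $\psi\tilde p \ll \zeta^\alpha$ with a bounded density and $\psi>0$ a.s., $\psi\tilde p$ is still $\bP$-a.s.\ equivalent to a deterministic finite measure, which we rename $\zeta^\alpha$. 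This is purely a matter of choosing the normalization of $\zeta^\alpha$ and does not affect any subsequent argument.

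The main obstacle is the measurability/predictability of the density $\Phi$: one must check that a jointly predictable version of $d\zeta^{\alpha+1}/d\tilde p^{\alpha+1}$ exists on $]T_\alpha,T_{\alpha+1}]$. This follows from the structure of $\tilde p^{\alpha+1}$ — it is built from $F^{\alpha+1}$, whose dependence on $\omega$ is $\F_{T_\alpha}$-measurable, and from the deterministic rate against $\zeta^{\alpha+1}$ — so a standard measurable-selection / regular-conditional-density argument on the Blackwell space $(E,\E)$ produces the required predictable version; the rest is routine Fubini and monotone-class bookkeeping.
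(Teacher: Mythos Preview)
Your core construction is exactly the paper's: on each stochastic interval $]T_{\alpha-1},T_\alpha]$ set $\psi$ equal to the Radon--Nikodym derivative $d\zeta^\alpha/d\tilde p^\alpha(\omega,\cdot,\cdot)$ and splice the pieces together via the disjoint-support lemma. The paper's entire proof is the single display
\[
\psi(\omega,t,x)=\sum_\alpha I_{t\in]T_{\alpha-1}, T_{\alpha}]}\left(\left.\frac{d\zeta^\alpha}{d\tilde p^\alpha(\omega, \cdot, \cdot)}\right|_{(t,x)}\right),
\]
with no further justification. Your identification of the $\zeta^\alpha$ versus $\zeta^{\alpha+1}$ indexing slip is correct; the paper is internally inconsistent on this point (compare the statement, the proof, and the later isometry), and your reading is the right one.

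Where you diverge is in worrying about the codomain $]0,1]$. The paper simply ignores this: the density it writes down is positive by equivalence but has no reason to be bounded by $1$, and the paper never revisits the issue. In other words, the ``$]0,1]$'' in the statement is almost certainly a typo for $]0,\infty[$ (or an artefact of an earlier draft), and your attempts to enforce it---truncating $\Phi$ and then redefining the $\zeta^\alpha$---are unnecessary and somewhat tangled. The lemma is only ever used through the isometry that follows, which needs positivity and the change-of-measure identity, not boundedness. So your second and third paragraphs can be dropped; the first paragraph, plus the predictability remark, already matches and slightly exceeds the paper's argument.
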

\begin{proof}
Simply take
\[\psi(\omega,t,x)=\sum_\alpha I_{t\in]T_{\alpha-1}, T_{\alpha}]}\left(\left.\frac{d\zeta^\alpha}{d\tilde p^\alpha(\omega, \cdot, \cdot)}\right|_{(t,x)}\right).\]
\end{proof}

\begin{definition}
We shall denote by $q_\psi$ the signed measure $q$ rescaled by $\psi$, that is,
\[q_\psi(t, A) := \int_{]0,t]\times A} \psi(\omega, s,x)q(\omega, ds,dx).\]
For simplicity, we may write $\psi_{t,x}$ for $\psi(\omega, t,x)$.
\end{definition}

\begin{lemma}
If $f$ is $q$-integrable, then $f \cdot \psi^{-1}$ is $q_\psi$-integrable and the two integrals agree. If $f$ is predictable, then so is $f \cdot \psi^{-1}$.
\end{lemma}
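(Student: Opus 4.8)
The plan is to exploit the fact that $\psi$ takes values in $]0,1]$, so that $\psi^{-1}$ is a well-defined, strictly positive predictable function bounded below by $1$, and then to reduce the claim to the change-of-integrand formula for stochastic integrals with respect to a (signed) random measure. First I would record that for a predictable integrand $f$ the stochastic integral $f \cdot q$ against $q = p - \tilde p$ is defined whenever the associated integral against the variation measure is finite, or equivalently (in the $L^2$ theory used here) whenever $\bE\big[\int (f^2 \psi)\, \tilde p(ds,dx)\big]$ is suitably controlled; the point of the rescaling is precisely that $q_\psi$ has predictable quadratic variation governed by $\psi \,\tilde p$, which by the preceding lemma pushes forward to the deterministic $\zeta^\alpha$ piece by piece on the intervals $]T_{\alpha-1},T_\alpha]$. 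So the natural route is to observe that, formally, $(f\psi^{-1}) \cdot q_\psi = (f\psi^{-1}) \cdot (\psi \cdot q) = (f \psi^{-1}\psi)\cdot q = f\cdot q$, and to justify each equality at the level of the defining Lebesgue–Stieltjes / stochastic integrals.

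The key steps, in order, are: (i) note that since $\psi \in\, ]0,1]$ is predictable, $\psi^{-1}\colon \Omega\times\bR^+\times E\to[1,\infty)$ is predictable, being the reciprocal of a strictly positive predictable function, so $f\cdot\psi^{-1}$ is predictable whenever $f$ is — this handles the last sentence of the statement; (ii) verify the claim pathwise for the finite-variation integrals against $p$ and against $\tilde p$ separately: for each fixed $\omega$, $p(\omega;\cdot)$ and $\tilde p(\omega;\cdot)$ are honest (sigma-finite, indeed finite on compacts by Assumption \ref{Ass1}) measures, and the identity $\int (f\psi^{-1})\psi\, dp = \int f\, dp$ is just the standard rule for integrating against a measure with density $\psi$, valid as soon as $f$ is $p$-integrable, and likewise for $\tilde p$; (iii) subtract to get the identity for $q = p-\tilde p$, after checking that $q$-integrability of $f$ (which, since $p$ and $\tilde p$ have disjoint supports from the earlier lemma, is equivalent to integrability against $|q| = p + \tilde p$, i.e. against both $p$ and $\tilde p$) transfers to $q_\psi$-integrability of $f\psi^{-1}$; here one uses that $q_\psi$ by definition has density $\psi$ with respect to $q$, so $|q_\psi|$ has density $\psi$ with respect to $|q|$, and $\int |f\psi^{-1}|\,\psi\, d|q| = \int |f|\, d|q| < \infty$; (iv) pass from the pathwise/variation statement to the stochastic-integral statement by the usual argument: both sides are square-integrable martingales (or, more generally, well-defined stochastic integrals) that agree as Lebesgue–Stieltjes integrals along each path on the predictable sets where the integrands are defined, and the stochastic integral against $q$ is characterised by its action against the variation measure in this setting, so the two stochastic integrals coincide almost surely.

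The main obstacle I anticipate is not any single computation but the bookkeeping around what ``$q$-integrable'' and ``$q_\psi$-integrable'' precisely mean and making sure the density argument is legitimate: $q$ is a signed (indeed martingale) random measure, not a measure with a sign-definite density, so the cleanest path is to decompose everything through $p$ and $\tilde p$, which are genuine measures, use the disjointness of their supports supplied by the earlier lemma to guarantee no cancellation is being swept under the rug, and only recombine at the end. A secondary technical point is confirming that $d|q_\psi| = \psi\, d|q|$ — this is immediate from $|q| = p + \tilde p$, $q_\psi = \psi\cdot q$ and the fact that $\psi \geq 0$, so $|q_\psi| = \psi\cdot|q| = \psi\cdot p + \psi\cdot\tilde p$ — after which the integrability transfer and the agreement of the two integrals both follow from the elementary change-of-variables formula $\int g\,\psi\, d\nu = \int (g\psi)\, d\nu$ for a nonnegative density $\psi$ against a positive measure $\nu$, applied with $\nu = p$ and $\nu = \tilde p$ and $g = f\psi^{-1}$.
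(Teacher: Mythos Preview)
Your proposal is correct and follows essentially the same idea as the paper's one-line proof, which simply notes that $\psi$ is predictable and is, for each $\omega$, the Radon--Nikodym derivative $dq_\psi/dq$. One small caveat: the earlier lemma gives disjoint supports among the $p^\alpha$'s (and among the $\tilde p^\alpha$'s) for different $\alpha$, not between $p$ and $\tilde p$ themselves; the mutual singularity of $p$ and $\tilde p$ that you use for $|q|=p+\tilde p$ instead follows from $p$ being purely atomic while $\tilde p$ is continuous in $t$ under Assumption~\ref{Ass1}.
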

\begin{proof}
 This is clear as $\psi$ is predictable and for each $\omega$ equals the Radon-Nikodym derivative $dq_\psi/dq$.
\end{proof}

Using the previous lemma, we immediately see that our martingale representation theorem can be equivalently stated in terms of $q_\psi$, rather than $q$. This will be preferable, as $q_\psi$ has significantly better orthogonality properties than $q$, and so we shall hereafter focus on $q_\psi$.

We now seek to understand the space of integrands which yield square integrable martingales, when integrated with respect to $q_\psi$. As our martingale representation is not given by an orthonormal set of martingales, but rather by a random measure $q_\psi$ with $q_\psi(t,A)$ and $q_\psi(t, B)$ correlated, we need to be careful in our definition of the appropriate space of integrands.

\begin{lemma}
For all $f, g$ such that $\int_{]0,t]\times E} f(t,x)q_\psi(dt, dx)$ is square integrable (and similarly for $g$), we have the isometry
\[\begin{split}
 (f,g)_{q_\psi}&:=\bE\left[\left(\int_{\bR^+\times E} f(t,x) q_\psi(dt, dx)\right)\left(\int_{\bR^+\times E} g(t,x) q_\psi(dt, dx)\right)\right] \\
   &= \bE\left[\int_{\bR^+\times E}f(t,x)g(t, x) \psi_{t,x}^2 \tilde p(\omega, dt, dx)\right]\\
 &=\sum_\alpha \int_{\bR^+\times E}\bE\left[I_{t\in]T_\alpha,T_{\alpha+1}]}f(t,x)g(t, x)\right] \zeta^\alpha(dt, dx)
  \end{split}
\]
We shall write $\|f\|_{q_\psi}^2= (f,f)_{q_\psi}$.
\end{lemma}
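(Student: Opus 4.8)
The plan is to establish the three expressions by moving from the outermost (the covariance of the two stochastic integrals) inwards, using the quadratic-variation characterization of the $L^2$-isometry for stochastic integrals against a purely discontinuous martingale measure. First I would recall that for $f$ with $\int f\, q_\psi$ square integrable, the process $t\mapsto \int_{]0,t]\times E} f\, q_\psi$ is a square-integrable martingale, so by the standard $L^2$-theory its terminal second moment equals $\bE[\langle \int f\, q_\psi\rangle_\infty]$, and the polarized version gives $(f,g)_{q_\psi} = \bE[\langle \int f\, q_\psi, \int g\, q_\psi\rangle_\infty]$. The predictable quadratic variation of the stochastic integral against $q_\psi$ is $\int f g \, d\langle q_\psi\rangle$, and since $q_\psi$ is obtained from $q$ by multiplying by the predictable integrand $\psi$, we have $d\langle q_\psi\rangle = \psi^2\, d\langle q\rangle$. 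So the first equality reduces to identifying $\langle q\rangle$ and replacing it by $\tilde p$.

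The key simplification is Assumption~\ref{Ass1}: because each $\zeta^\alpha$ is continuous in $t$, and $\tilde p^\alpha$ is a.s.\ equivalent to $\zeta^\alpha$, the compensator $\tilde p^\alpha$ is a.s.\ continuous in $t$; hence $\Delta F^\alpha \equiv 0$ on the relevant set, and by the remark following the definition of $q^\alpha$ we get $\langle q^\alpha(t,A)\rangle = \tilde p^\alpha(t,A)$ with no correction term. Summing over $\alpha$ (the supports are disjoint by the lemma preceding Corollary~\ref{cor:martdiffrepMPPzeroed}, so the cross terms in $\langle q^\alpha, q^\beta\rangle$ for $\alpha\neq\beta$ vanish) yields $\langle q \rangle = \tilde p$ as a random measure on $\bR^+\times E$. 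Substituting back gives
\[(f,g)_{q_\psi} = \bE\left[\int_{\bR^+\times E} f(t,x)\,g(t,x)\,\psi_{t,x}^2\, \tilde p(\omega,dt,dx)\right],\]
which is the second expression.

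For the third equality I would decompose $\tilde p = \sum_\alpha \tilde p^\alpha$ as a disjoint sum over the blocks $\{t\in\,]T_{\alpha-1},T_\alpha]\}$; but it is slightly cleaner to index by $]T_\alpha,T_{\alpha+1}]$ and apply the earlier lemma on the rescaling function $\psi$, which states precisely that
\[\bE\left[\int_{]T_\alpha,T_{\alpha+1}]\times E} h(\omega,s,x)\,\psi(\omega,s,x)\,\tilde p(\omega,ds,dx)\right] = \int_{\bR^+\times E}\bE\left[I_{s\in]T_\alpha,T_{\alpha+1}]}\,h(\omega,s,x)\right]\zeta^\alpha(ds,dx).\]
Applying this with $h = fg\psi$ (one factor of $\psi$ is absorbed by the rescaling, leaving one $\psi$, but note the $\zeta^\alpha$ on the right no longer carries $\psi$ since $\psi$ is exactly the Radon--Nikodym derivative $d\zeta^\alpha/d\tilde p^\alpha$ on that block, so $\psi^2\tilde p^\alpha$ integrated is $\psi \cdot \zeta^\alpha$; one checks that the remaining $\psi$ is again cancelled), summing over $\alpha$, and using that the blocks $]T_\alpha,T_{\alpha+1}]$ partition $\bR^+$, produces
\[(f,g)_{q_\psi} = \sum_\alpha \int_{\bR^+\times E}\bE\left[I_{t\in]T_\alpha,T_{\alpha+1}]}\,f(t,x)\,g(t,x)\right]\zeta^\alpha(dt,dx).\]

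The main obstacle I anticipate is the careful bookkeeping of the factors of $\psi$ and the measurability/integrability justifications: one must verify that Fubini (interchange of $\bE$ and the $\zeta^\alpha$-integral) is legitimate, that the a.s.\ continuity of $\tilde p^\alpha$ genuinely follows from Assumption~\ref{Ass1} so that the quadratic-variation correction term drops, and that the sum over $\alpha$ converges (which again uses the ``finitely many jumps on compacts'' hypothesis, so that on $[0,t]$ only finitely many blocks contribute for a.e.\ $\omega$). The polarization step and the passage from $\langle q^\alpha\rangle$ to $\langle q\rangle$ are routine given the disjoint-support lemma, so the real content is simply confirming that no stray $\psi$ factors survive and that all the exchanges of sum, expectation, and integral are valid.
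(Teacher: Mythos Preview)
Your approach is essentially the paper's: use Assumption~1 (continuity of $\zeta^\alpha$, hence of $\tilde p^\alpha$) to conclude $\langle q\rangle = \tilde p$, rescale by the predictable $\psi$ to obtain the first isometry, then decompose over the intervals $]T_\alpha,T_{\alpha+1}]$ and apply the $\psi$-lemma for the second. The paper's own proof is three sentences long and invokes exactly these two steps (citing Elliott for the quadratic variation of $q$); your flagged concern about the $\psi$-bookkeeping in the last equality is legitimate, but the paper glosses over it just as briefly.
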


\begin{proof}
From \cite{Elliott1977}, we know that the quadratic variation of $q$ is given by $\tilde p$, as we have assumed that $\zeta^\alpha$, and hence $\tilde p^\alpha$, is continuous in $t$. As $q_\psi$ is simply a rescaled version of $q$, this quickly establishes the first isometry. The second then follows by breaking up the integral into the intervals $]T_\alpha, T_{\alpha+1}]$, and extracting the sum.
\end{proof}

From this lemma, we can see that our use of the martingale random measure $q_\psi$ is a slight generalisation of constructing a martingale representation using `normal' martingales, that is, martingales with predictable quadratic variation given by Lebesgue measure (see, for example, Emery \cite{Emery1989}). Here we replace Lebesgue measure with an arbitrary deterministic measure $\zeta^\alpha$, which can vary in $\alpha$, and we retain the presence of the jump space $E$.

\section{Chaos representation property}\label{sec:Chaos}

From this point onwards, we will not restrict ourselves to this particular choice of martingale representation. In fact, there may be cases where an alternative martingale representation is available and more convenient. We shall simply make the following assumption.
\begin{assumption}
 We are in the setting described in Section \ref{subsec:setting}, and there exists a random measure $m$ such that
\begin{itemize}
 \item $\int_{]0,t]\times E} f(t,x) m(dt, dx)$ is a martingale for all predictable, sufficiently bounded functions $f$,
 \item every square integrable martingale has a representation $\int_{]0,t]\times E} f(t,x) m(dt, dx)$ for some predictable function $f$,
 \item for all sufficiently integrable predictable $f$ and $g$,
\[\begin{split}
 (f,g)_m&:=\bE\left[\left(\int_{\bR^+\times E} f(t,x) m(dt, dx)\right)\left(\int_{\bR^+\times E} g(t,x) m(dt, dx)\right)\right] \\
 &=\sum_\alpha \int_{\bR^+\times E}\bE\left[I_{t\in]T_\alpha,T_{\alpha+1}]}f(t,x)g(t, x)\right] \zeta^\alpha(dt, dx).
  \end{split}
\]
for some family of deterministic measures $\zeta^\alpha$. As before $\|f\|_m^2 := (f,f)_m$.
\end{itemize}
\end{assumption}

 Under Assumption \ref{Ass1}, $m=q_\psi$ satisfies these requirements. However, it may be convenient to take an alternative representation, particularly in cases when Assumption \ref{Ass1} does not hold. A simple example of this is when $E$ posesses a group structure (e.g. when $E$ is a vector space). If $E$ is discrete, for example, when we consider a countable-state Markov chain, then the representation based on $p$ will often not satisfy Assumption \ref{Ass1}, as the previous state $\xi_{\alpha-1}$ is a null set of the measure $p^\alpha$, however is stochastic, which often contradicts the equivalence with the deterministic measure $\zeta^\alpha$. On the other hand, we could use a representation based on the fundamental processes $\pi^\alpha(t, A) = I_{t\geq T_\alpha} I_{\xi_\alpha-\xi_{\alpha-1} \in A}$ (in the place of $p^\alpha$), that is, we use the indicator functions of the jumps themselves, rather than the indicator of the location after the jump. This representation (appropriately rescaled) will satisfy our assumption as soon as the set of possible values (occuring with rate $>0$) for the $\alpha$th jump is deterministic.

Using the martingale $m$, we now prove the existence of the Chaos representation of a random variable.

\begin{definition}
For two (stopping) times $T, T'\leq \infty$, we shall write
\[\F_{T\curlywedge T'} = \F_T\cap \F_{T'-}\]
and
\[\int_0^{T\curlywedge T'} (\cdot) m(dt, dx) := \int_{(]0, T]\cap ]0,T'[)\times E} (\cdot) m(dt, dx).\]
For simplicity, in place of $m(dt, dx)$ we may write $dm$, similarly $dm_1$ for $m(dt_1, dx_1)$, $dm_2$ for $m(dt_2, dx_2)$, etc. and also $d\zeta^\alpha$ for $\zeta^\alpha(dt,dx)$, $d\zeta^\alpha_1$ for $\zeta^\alpha(dt_1,dx_1)$, etc.
\end{definition}

Note that if $\tau=\infty$, then $L^2(\F_{T\curlywedge\tau})=L^2(\F_{T})$, as we have assumed $\F_{\infty-} = \F_\infty$.

\begin{definition}
Let $k<\infty$, $\tau\leq\infty$. Let $\{g_i\}$ be a family of measurable functions $g_i:\Omega\times(\bR^+\times E)^i\to \bR$. Then we define the $k$-fold iterated integral operator via the recursion
\[\I^{k}_\tau(\{g_i\}) = g_0 + \int_0^{T_{k}\curlywedge \tau} \I^{k-1}_t(\{g_{i-1}(t,x, \cdots)\}_{i=1}^{k}) dm,\]
with initial value $\I^0_\tau(g_0)=g_0$. For simplicity, we write $\I^k(\{g_i\}):=\I^k_\infty(\{g_i\})$.
\end{definition}

With this definition, the first few terms of our integral operator are
\[\begin{split}
   \I^\emptyset_\tau(\{g_0\}) &= g_0\\
   \I^{1}_\tau(\{g_0, g_1\}) &= g_0+\int_0^{T_1\curlywedge \tau} g_1(t,x)m(dt, dx)\\
   \I^{2}_\tau(\{g_i\}_{i=0}^2)& =g_0+\int_0^{T_2\curlywedge \tau}\left(g_1(t_1,x_1)+ \int_0^{T_1\curlywedge t_1} g_2(t_1,x_1,t_2,x_2) dm_2\right) dm_1\\
   \I^{3}_\tau(\{g_i\}_{i=0}^3)& =g_0+\int_0^{T_3\curlywedge \tau}\left(g_1+ \int_0^{T_2\curlywedge t_1} \left(g_2 +\int_0^{T_1\curlywedge t_2} g_3\, dm_3\right)dm_2\right) dm_1\\
  \end{split}
\]
The important point to notice is that the `internal' integrals are taken only up to the preceding jump times in our sequence.

We can now state a precursor to the chaos representation theorem, using the iterated integrals $\I$. 

\begin{theorem}\label{thm:chaosfiniteprecursor}
Let $Y\in L^2(\F_{T_k\curlywedge \tau})$ for $k<\infty$, and deterministic $\tau\leq\infty$. Then there exists a sequence of deterministic functions $\{g_i\}_{i=1}^k$ such that
\[Y=\I^{k}_\tau(\{g_i\}).\]
\end{theorem}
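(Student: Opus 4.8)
The plan is to induct on $k$. The base case $k=0$ is trivial: $Y\in L^2(\F_{T_0\curlywedge\tau})=L^2(\F_{T_0})$ is $\F_{T_0}$-measurable, hence a deterministic constant (the initial position $\xi_0$ is fixed), so $Y=g_0=\I^0_\tau(g_0)$. For the inductive step, suppose the result holds for $k-1$ (for every deterministic $\tau'\le\infty$). Given $Y\in L^2(\F_{T_k\curlywedge\tau})$, first form the martingale $M_t=\bE[Y\mid\F_t]$. Since $Y$ is $\F_{T_k\curlywedge\tau}$-measurable, $M$ is already `determined' by time $T_k\wedge\tau$, and we write $Y=M_0+(M_{T_k\curlywedge\tau}-M_0)$. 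Apply the martingale representation assumption to get a predictable $f$ with $M_t=M_0+\int_{]0,t]\times E}f\,dm$; stopping at $T_k\curlywedge\tau$ gives
\[
Y=M_0+\int_0^{T_k\curlywedge\tau}f(t,x)\,m(dt,dx).
\]
Set $g_0:=M_0$, again a deterministic constant.

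Next I would analyse the integrand $f(t,x)$ evaluated on the relevant region. On the set $\{(t,x):t\le T_k\curlywedge\tau\}$, predictability means $f(t,x)$ is $\F_{t-}$-measurable, and since $m$ only charges the jump times, what matters is the value of $f$ at the jump $(T_{k},\xi_{k})$ given the past, together with its behaviour on $]0,T_{k}[$. The key observation is that for $t\le T_k$, $\F_{t-}\subset\F_{T_{k-1}}$ on the relevant event (before the $k$th jump occurs), so $f(t,x)$, as a function on $]0,T_k[\,\times E$, defines for $\bP$-a.e.\ $\omega$ a fixed function that is itself $\F_{T_{k-1}\curlywedge t}$-measurable in the appropriate sense. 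This is where I would want to quote Theorem~\ref{thm:martdiffrepMPP} / Corollary~\ref{cor:martdiffrepMPPzeroed}: the representing integrand for the jump-difference martingale can be taken $\F_{T_{k-1}}$-measurable, and zeroed outside $]T_{k-1},T_k]$. So $f(t,x)=h(\omega;t,x)$ where, fixing the pre-$T_{k-1}$ data, $h$ is $\F_{T_{k-1}}$-measurable; more precisely the random coefficient $f$ lies in $L^2(\F_{T_{k-1}\curlywedge t})$ as a function of the integration variable $(t,x)$.

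Now I would invoke the inductive hypothesis: for each fixed $(t,x)$, the random variable $f(\cdot;t,x)\in L^2(\F_{T_{k-1}\curlywedge t})$ admits a representation $f(t,x)=\I^{k-1}_t(\{g_{i-1}(t,x,\cdots)\}_{i=1}^{k})$ for deterministic functions $g_1(t,x,\cdot),\ldots$. Substituting this into the displayed formula for $Y$ yields exactly
\[
Y=g_0+\int_0^{T_k\curlywedge\tau}\I^{k-1}_t(\{g_{i-1}(t,x,\cdots)\})\,dm=\I^k_\tau(\{g_i\}),
\]
which is the claim. Two points need care. First, measurability/joint-measurability of $(t,x)\mapsto g_i(t,x,\cdots)$: the inductive hypothesis must be applied in a parametrised way, so I would either appeal to a measurable-selection argument or, more cleanly, first establish the result for a dense class (e.g.\ $Y$ of the form $\eta\cdot(M_{T_k\curlywedge\tau}-M_{T_{k-1}\curlywedge\tau})$ with $\eta\in L^2(\F_{T_{k-1}})$, or simple functionals of finitely many $(T_j,\xi_j)$) and then pass to the limit using the isometry $\|\cdot\|_m$ — linearity and closedness of the span of iterated integrals in $L^2$. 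Second, the deterministic constant $\tau$ may fall strictly between $T_{k-1}$ and $T_k$ on some paths and beyond $T_k$ on others; the $\curlywedge$ notation and the convention that internal integrals stop at the previous jump time handle this automatically, but I would check the edge behaviour explicitly.

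The main obstacle I anticipate is precisely the parametrised/measurable application of the inductive hypothesis — turning the pathwise statement ``for each $\omega$, $f$ is a fixed $\F_{T_{k-1}}$-measurable function of $(t,x)$'' into a jointly measurable family of deterministic kernels $g_i$. I expect the cleanest route is the density argument: prove the theorem first for a convenient dense subclass of $L^2(\F_{T_k\curlywedge\tau})$ where the iterated-integral representation is explicit, then extend by continuity in the $\|\cdot\|_m$-norm, exploiting that $Y\mapsto\{g_i\}$ is (by the isometry lemma) an isometry onto a closed subspace.
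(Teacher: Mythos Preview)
Your proposal follows essentially the same inductive strategy as the paper's proof: apply the martingale representation to write $Y=\bE[Y]+\int_0^{T_k\curlywedge\tau}\tilde g\,dm$, observe that predictability forces $\tilde g(\cdot,t,x)\in L^2(\F_{T_{k-1}\curlywedge t})$, and then invoke the inductive hypothesis pointwise in $(t,x)$ to decompose $\tilde g$. The only differences are cosmetic: the paper first runs the induction for $\tau<\infty$ and afterwards passes to $\tau=\infty$ via martingale convergence, and it simply asserts the pointwise application of the inductive hypothesis without addressing the joint-measurability concern you flag (so your suggested density-then-closure route is not needed to match the paper, though it is arguably more honest).
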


\begin{proof}
 First assume $\tau<\infty$. We shall use induction, iterating in $\alpha\leq k$ over the cases where $Y\in L^2(\F_{T_\alpha\curlywedge \tau})$. For the initial case, suppose $Y\in L^2(\F_0)$. Then $Y$ is a constant, so $Y= \I^0(g^0) = g^0$ for some constant $g^0$.

Suppose $Y\in L^2(\F_{T_\alpha\curlywedge \tau})$ and that the result holds for all $Y'\in L^2(\F_{T_{\alpha-1}\curlywedge t})$ for $t\leq \tau$. By the martingale representation theorem, $Y$ has a representation of the form
\[Y = E[Y] + \int_0^{T_\alpha\curlywedge \tau} \tilde g(\omega, t, x) dm\]
for some predictable function $\tilde g$ with $\|\tilde g\|_m<\infty$. As $\tilde g$ is predictable, for every $(t,x)$ the random variable $\tilde g(\cdot,t,x)$ is $(\F_{T_{\alpha-1}\curlywedge t})$-measurable. As we have
\[\begin{split}
 \infty&> \|\tilde g\|_m^2= \bE\left[\left(\int_0^{T_\alpha\curlywedge \tau} \tilde g dm\right)^2\right]\\
&=\sum_{\beta\leq \alpha}\int_0^{\tau-} \bE[I_{t\in ]T_{\beta-1}, T_\beta]} \tilde g^2(t,x)] d\zeta^\beta
\end{split}
\]
we know $\bE[I_{t\in ]T_{\beta-1}, T_\beta]} \tilde g(t,x)^2]<\infty$ $\zeta^\beta$-a.s. for all $\beta\leq \alpha$. Taking the sum over $\beta\leq \alpha$, we see $\bE[\tilde g(t,x)^2]<\infty$. Therefore $\tilde g(t,x) \in L^2(\F_{T_{\alpha-1}\curlywedge t})$. As we have supposed that the result holds on $L^2(\F_{T_{\alpha-1}\curlywedge t})$, we can find deterministic $\{g^{(t,x)}_i(\cdots)\}_{i=0}^{\alpha-1}$ such that
\[\tilde g_n(\omega,t,x)= \I^{\alpha-1}_t(\{g_i^{(t,x)}(\cdots)\})\]
from which we define
\[g_i(t,x, \cdots) = g^{(t,x)}_{i-1}(\cdots) \text{ for }i>1; \quad g_0 = E[Y].\]
This yields the representation of $Y$,
\[Y = \I^{\alpha}_\tau(\{g_i\}) = E[Y] + \int_0^{T_\alpha\curlywedge \tau} \I^{\alpha-1}_t(\{g_i^{(t,x)}\}) dm.\]

By induction, the result is proven for $Y\in L^2(\F_{T_{k}\curlywedge \tau})$ for all $k<\infty$. We now seek to let $\tau\to\infty$. This is easily done by the convergence of square-integrable martingales. For $Y\in L^2(\F_{T_k})$, let $Y_\tau :=\bE[Y|\F_{\tau}]$, so that $Y_{\tau-}\in L^2(\F_{T_k\curlywedge \tau})$. Therefore $Y_{\tau-} = \I^k_\tau(\{g_i^\tau\})$ for some collection of functions $\{g_i^\tau\}$. It is easy to verify that these functions are consistent, that is, $g_i^\tau = g_i^{\tau'}$ on $[0, \tau\wedge \tau'[\times E$, and hence $\{g_i^\tau\}$ can be taken to be independent of $\tau$. Therefore, by martingale convergence,
\[Y\leftarrow \bE[Y|\F_{\tau-}]=Y_{\tau-} = \I^k_\tau(\{g_i\}) \to \I^k_\infty(\{g_i\})\quad a.s.\]
from which we see $Y=\I^k_\infty(\{g_i\})$.
\end{proof}

\begin{remark}
 Intuitively, this representation in terms of $\I^k$ has a simple interpretation. From the martingale representation theorem, we know we can write any $\F_{T_k\curlywedge \tau}=\F_{T_k}\cap\F_{\tau-}$-measurable random variable in terms of the stochastic integral on $[0,T_k]\cap[0,\tau[$ of a predictable process $\tilde g_t$. However, up to time $T_k$, a predictable process $\tilde g_t$ is $\F_{T_{k-1}\curlywedge t}$-measurable for each $t$, and so by induction can itself be written as an integral on $[0,T_{k-1}]\cap [0,t[$. Hence any $T_k$-measurable random variable can be written as the iterated stochastic integral, where each integral is at most up to an earlier jump time.
\end{remark}

We can now construct the chaos representation.

\begin{definition}
 For $T\leq\infty$ a stopping time, we shall write
\[\bS^n_T:= \{(s_1, s_2,...,s_n): 0\leq s_n<s_{n-1}<...<s_1\leq T\}\subset [0,T]^{n}\]
For $T$ a stopping time, we define the $n$-fold iterated integral
\[\begin{split}
   J^n_T(g) &= \int_{\bS^n_T} g(\{(s_k, x_k)\}) \bigotimes^n_{k=1} m(ds_k,dx_k)\\
&=\int_0^T\int_0^{s_1-}\int_0^{s_{2}-}...\int_0^{s_n-} g(...) dm_{n-1}\,...\, dm_2 \, dm_1.
  \end{split}
\]
For convenience, $J^0_T(g) := g$ for all constants $g$.
\end{definition}

\begin{definition}
For $T$ a stopping time, let \[H_T^m:=\overline{\mathrm{span}}\{J^n_T(g): \bE[(J^n_T(g))^2]<\infty, n\leq m\},\] the $L^2(\bP)$-closure of the span of the square integrable iterated stochastic integrals of order at most $m$ up to $T$. This is a Hilbert space, with the same inner product as $L^2(\bP)$.
\end{definition}

We can now prove the the Chaos representation theorem for random variables known after finitely many jumps.
\begin{theorem}\label{thm:chaosfinitejump}
 For any $k<\infty$,  we have $H_{T_k}^k = L^2(\F_{T_k})$.
\end{theorem}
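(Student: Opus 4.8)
The plan is to deduce Theorem \ref{thm:chaosfinitejump} from Theorem \ref{thm:chaosfiniteprecursor} by reorganising the nested iterated integral $\I^k_\infty(\{g_i\})$ into a linear combination of the ``simplex'' iterated integrals $J^n_{T_k}$. The key observation is that the recursion defining $\I^k$ already produces an object built entirely out of stochastic integrals with respect to $m$, in which the inner integrals run only up to the preceding jump time; the $n$-fold pieces $J^n_{T_k}$ are precisely what one gets by expanding this recursion and collecting terms according to how many integrations actually occur. What must be checked is that (i) each term in the expansion has the form $J^n_{T_k}(h_n)$ for a suitable deterministic kernel $h_n$ supported on the appropriate simplex, and (ii) the integrability $\bE[(J^n_{T_k}(h_n))^2]<\infty$ holds, so that each term genuinely lies in $H^k_{T_k}$.

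First I would fix $Y\in L^2(\F_{T_k})$ and invoke Theorem \ref{thm:chaosfiniteprecursor} (with $\tau=\infty$) to write $Y=\I^k_\infty(\{g_i\})$ for deterministic $\{g_i\}_{i=0}^k$. Then I would expand the recursion: unrolling
\[\I^k_\tau(\{g_i\}) = g_0 + \int_0^{T_k\curlywedge\tau} \I^{k-1}_t(\{g_{i-1}(t,x,\cdots)\}) \, dm\]
fully gives
\[Y = \sum_{n=0}^{k} \int_0^{T_k}\int_0^{T_{k-1}\curlywedge s_1}\cdots\int_0^{T_{k-n+1}\curlywedge s_{n-1}} g_n(s_1,x_1,\dots,s_n,x_n) \, dm_n\cdots dm_1,\]
where the $n$-th summand comes from ``using'' the term $g_n$ after $n$ integrations and stopping there. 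On the event that there are finitely many jumps, the integration domain $]0,s_{j-1}[\cap]0,T_{k-j+1}]$ appearing at the $j$-th level is contained in $]0,s_{j-1}[$, and since $s_j<s_{j-1}$ automatically on the support of the iterated integral against $m$ (the internal integral runs to $s_{j-1}-$), each summand is an integral over the simplex $\bS^n_{T_k}$ of the kernel $g_n$, i.e.\ is exactly $J^n_{T_k}(g_n)$; the constraints $s_j\le T_{k-j+1}$ are automatically satisfied on $\bS^n_{T_k}$ because the $j$-th smallest of $k$ ordered jump times is at most... — here I would argue that on $\{s_1<\cdots \text{ chosen from jump times}\}$, which is where $m$ charges, we have $s_n<s_{n-1}<\cdots<s_1\le T_k$ and each is a jump time, so the stopping at $T_{k-j+1}$ is vacuous. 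Thus $Y=\sum_{n=0}^k J^n_{T_k}(g_n)$.

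Next I would verify the $L^2$ bounds. Since $Y\in L^2$ and the $J^n_{T_k}(g_n)$ for distinct $n$ are mutually orthogonal in $L^2(\bP)$ — this follows from the isometry in the assumption on $m$ together with the fact that an inner integral over $]0,s-[$ is a martingale increment conditionally centred given the outer filtration, so cross terms of different orders vanish — each $J^n_{T_k}(g_n)$ is the orthogonal projection of $Y$ onto the order-$n$ component and hence is itself square integrable with $\sum_n \bE[(J^n_{T_k}(g_n))^2] = \bE[Y^2]<\infty$. Therefore every summand lies in $H^k_{T_k}$, so $Y\in H^k_{T_k}$, giving $L^2(\F_{T_k})\subseteq H^k_{T_k}$; the reverse inclusion is immediate since each $J^n_{T_k}(g)$ with $n\le k$ is $\F_{T_k}$-measurable and square integrable by construction of $H^k_{T_k}$.

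The main obstacle I expect is the bookkeeping in step two: namely making fully rigorous that the nested domains $]0,t]\cap]0,T_{k-j+1}]$ collapse to the ordered simplex $\bS^n_{T_k}$ and that nothing is lost (no extra boundary terms, no contribution from the ``diagonal'' where consecutive times coincide, which is ruled out because $m$ has no atoms in $t$ under the continuity assumption on $\zeta^\alpha$). Establishing the clean orthogonality of the $J^n$ of different orders — really a consequence of conditioning and the martingale property built into the innermost integral — is routine but needs to be stated carefully to justify the Pythagorean identity that yields the integrability for free.
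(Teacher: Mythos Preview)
Your argument contains a genuine gap at the central step. You claim that when the recursion for $\I^k$ is unrolled, the constraints $s_j\le T_{k-j+1}$ on the inner limits are ``vacuous'' because $m$ ``charges'' only the jump times. This is false: $m$ is a \emph{martingale} random measure, not the jump measure $p$. Between consecutive jump times the stochastic integral $s\mapsto \int_0^s h\,dm$ varies continuously (through the compensator part $-\tilde p$), so integrating over $]0,T_{k-j+1}]\cap]0,s_{j-1}[$ is \emph{not} the same as integrating over $]0,s_{j-1}[$ when $s_{j-1}>T_{k-j+1}$. Consequently the identity $\I^k(\{g_i\})=\sum_{n=0}^k J^n_{T_k}(g_n)$ that you rely on is wrong in general. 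A concrete counterexample: for a rate-one Poisson process with $m(dt)=dN_t-dt$, $k=2$, $g_0=g_1=0$, $g_2\equiv 1$, one computes
\[
\I^2(\{0,0,1\})=1-T_1-\tfrac{T_1^2}{2}-T_2+T_1T_2,\qquad J^2_{T_2}(1)=1-2T_2+\tfrac{T_2^2}{2},
\]
which are different random variables (the second does not even depend on $T_1$). So the attempted direct decomposition collapses, and with it the subsequent orthogonality and integrability arguments.

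The paper does not attempt any such identification. Instead it argues by orthogonal complement: it supposes $Y\in L^2(\F_{T_k})$ is orthogonal to every $J^n_{T_k}(h)$, writes $Y=\I^k(\{g_i\})$ via Theorem~\ref{thm:chaosfiniteprecursor}, and then shows $g_0=0,\,g_1=0,\,\ldots$ successively by pairing $Y$ with $J^n_{T_k}(g_n)$ and applying the isometry; the cross term with the ``remainder'' $\xi$ vanishes because $\bE[\xi]=0$, leaving a nonnegative integral of $g_n^2$ that is forced to be zero. This orthogonality route sidesteps precisely the domain mismatch that breaks your direct approach: one never needs the $\I$-integrals and the $J$-integrals to have the same domains, only that their \emph{inner products} reduce to the right expressions, and for that the extra cutoff at $T_{k-j+1}$ in $\I^k$ is harmless once one conditions and uses $\bE[\xi]=0$.
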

\begin{proof}
 Clearly $H_{T_k}^k \subseteq L^2(\F_{T_k})$, and $H_{T_k}$ is a Hilbert subspace. Therefore, either $H_{T_k}^k=L^2(\F_{T_k})$ or there exists a nonzero random variable $Y\in L^2(\F_{T_k})$ which is orthogonal to every element of $H_{T_k}^k$.

By Theorem \ref{thm:chaosfiniteprecursor}, the space spanned by the iterated integrals $\I^k$ is $L^2(\F_{T_k})$. Hence $Y$ has a representation of the form $Y=\I^k(\{g_i\})$, for some functions $\{g_i\}$. We seek to show that $g_i=0$ for all $i$ on the relevant range of integration. We shall do this using induction, however due to notational complexity, we shall simply write out the first three steps, the rest follow in the same manner.

\textbf{For $g_0$,} note that we must have
\[\begin{split}
   0&= \bE[Y J_{T_k}^0(g_0)] \\
&= \bE\left[\left(g_0+ \int_0^{T_k} \I^{k-1}_t (\{g_{i-1}(t,x,\cdots)\})dm\right) (g_0)\right]\\
&=g_0^2 + g_0 \bE\left[\int_0^{T_k} \I^{k-1}_t (\{g_{i-1}(t,x,\cdots)\})dm\right]\\
&=g_0^2
  \end{split}
\]
and we see $g_0\equiv 0$.

\textbf{For $g_1$,} note that as $g_0\equiv 0$, we know
\[Y = \int_0^{T_k}\I^{k-1}_{t_1}(\{g_{i-1}(\cdots)\}) dm_1 = \int_0^{T_k}(g_1(t_1,x_1) + \xi(t_1,x_1)) dm_1 .\]
 where $\xi(t_1,x_1) = \int_0^{T_{k-1}\curlywedge t_1} \I^{k-2}_{t_2}(\{g_{i-2}(t_1,x_1,\cdots)\})dm_2$. Note that $\bE[\xi(t,x)] \equiv 0$. Then
\[\begin{split}
0&= \bE[Y J^1_{T_k}(g_1)]\\
&=\bE\left[\left(\int_0^{T_k}(g_1(t_1,x_1) + \xi(t_1,x_1)) dm_1 \right)\left(\int_0^{T_1} g_1 dm_1\right)\right]\\
&=\|I_{t_1\leq T_k}g_1(t_1,x_1)\|_m^2 + \sum_\alpha \int_0^\tau \bE\left[\xi(t_1,x_1))\right] g_1(t_1, x_1) d\zeta^\alpha\\
&=\|I_{t_1\leq T_k}g_1(t_1,x_1)\|_m^2
  \end{split}
\]
and so $g_1(t_1,x_1)\equiv 0$ on $[0,T_k]$ (up to a set of $\zeta^\alpha$-measure zero for all relevant $\alpha\leq k$).

\textbf{For $g_2$,} note that as $g_0=g_1=0$, we know
\[\begin{split}
Y&= \int_0^{T_k}\int_0^{T_{k-1}\curlywedge t_1}\I^{k-2}_{t_2}(\{g_{i-2}(\cdots)\}) dm_2 dm_1 \\
&= \int_0^{T_k}\int_0^{T_{k-1}\curlywedge t_1}(g_2(t_1,x_1,t_2, x_2) + \xi(t_1,x_1,t_2, x_2)) dm_2 dm_1.
\end{split}\]
where $\bE[\xi(...)]\equiv 0$. Hence, expanding in the same way as above
\[\begin{split}
0&= \bE[Y J^2_{T_k}(g_2)]\\
&=\sum_{\alpha\leq k}\int_0^\infty \bE\left[I_{t_1\in ]T_{\alpha-1}, T_\alpha]} \left(\int_0^{T_{k-1}\curlywedge t_1} g_2(t_1,x_1 t_2,x_2)dm_2\right)^2\right]d\zeta^\alpha_1\\
&=\sum_{\alpha\leq k}\sum_{\beta\leq \alpha}\int_0^\infty\int_0^{t_1} \bE[I_{t_1\in ]T_{\alpha-1}, T_\alpha]}I_{t_2\in ]T_{\beta-1}, T_\beta]}] (g_2(t_1, x_1, t_2,x_2))^2 d\zeta^\beta_2 d\zeta^\alpha_1\\
  \end{split}
\]
and so $g_2\equiv 0$ up to a set of measure zero, on its relevant domain.

\textbf{Continuing the induction}, we see that $g_i \equiv 0$ for all $i\leq k$. Therefore $Y\equiv 0$, and there is no element of $L^2(\F_{T_k})$ orthogonal to all of $H_{T_k}$. Therefore the spaces coincide.
\end{proof}

Finally, we can expand our Chaos representation theorem to all of $L^2(\F)$.

\begin{theorem}
 Any square integrable random variable can be arbitrarily well approximated in $L^2$ by a sum of iterated integrals, or equivalently,
\[L^2(\F) = \overline{\left\{\cup_k H^k_{T_k}\right\}}.\]
\end{theorem}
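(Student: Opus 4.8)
The plan is to combine Theorem~\ref{thm:chaosfinitejump} with a martingale convergence argument, exactly in the spirit of the final step in the proof of Theorem~\ref{thm:chaosfiniteprecursor}. The inclusion $\overline{\{\cup_k H^k_{T_k}\}} \subseteq L^2(\F)$ is immediate, since each $H^k_{T_k}\subseteq L^2(\F_{T_k})\subseteq L^2(\F)$ and $L^2(\F)$ is closed. So the real content is the reverse inclusion, i.e.\ the density of $\cup_k H^k_{T_k}$ in $L^2(\F)$.

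First I would record that by Theorem~\ref{thm:chaosfinitejump} we have $H^k_{T_k} = L^2(\F_{T_k})$ for every finite $k$, and that since $\F_{T_k}\subseteq \F_{T_{k+1}}$, the family $\cup_k H^k_{T_k} = \cup_k L^2(\F_{T_k})$ is an increasing union of closed subspaces of $L^2(\bP)$. Next I would establish that $\F_{T_k}\uparrow\F$ as $k\to\infty$; more precisely, that $\sigma\big(\cup_k \F_{T_k}\big) = \F_{\infty-} = \F$, where the last equality is the standing assumption $\F_{\infty-}=\F_\infty$. This is where the hypothesis $\lim_n T_n = \infty$ a.s.\ is essential: for a fixed deterministic $t$, on the eventually-certain event $\{T_k \ge t\}$ the whole trajectory $\{X_s : s\le t\}$ is $\F_{T_k}$-measurable, so $\F_t \subseteq \sigma(\cup_k \F_{T_k})$ up to null sets, and letting $t\to\infty$ yields the claim. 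Equivalently, this is the general fact that for stopping times $\tau_k\uparrow\tau$ one has $\bigvee_k \F_{\tau_k} = \F_{\tau-}$, applied with $\tau=\infty$.

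With this in hand, fix $Y\in L^2(\F)$ and set $Y_k := \bE[Y\mid\F_{T_k}]$. Since $\F_{T_k}\uparrow\F$, L\'evy's upward martingale convergence theorem gives $Y_k\to Y$ both almost surely and in $L^2(\bP)$. But $Y_k\in L^2(\F_{T_k}) = H^k_{T_k}\subseteq \cup_k H^k_{T_k}$, so $Y$ lies in the $L^2$-closure of $\cup_k H^k_{T_k}$. As $Y$ was arbitrary, $L^2(\F)\subseteq \overline{\{\cup_k H^k_{T_k}\}}$, which together with the trivial inclusion proves the theorem.

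The only genuinely non-trivial point is the identification $\sigma(\cup_k \F_{T_k}) = \F$; everything else is a one-line consequence of results already proved and of standard Hilbert-space and martingale facts. I expect the measure-theoretic bookkeeping around stopping-time $\sigma$-algebras and null sets to be the part requiring the most care, even though it is essentially routine.
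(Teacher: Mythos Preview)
Your proposal is correct and follows essentially the same approach as the paper: both define $Y_k=\bE[Y\mid\F_{T_k}]$, invoke martingale convergence to get $Y_k\to Y$ in $L^2$, and then use Theorem~\ref{thm:chaosfinitejump} to identify $Y_k$ as (a limit of) iterated integrals. If anything, your write-up is more careful than the paper's, since you explicitly justify $\bigvee_k\F_{T_k}=\F$ via $T_k\to\infty$ a.s.\ and the standing assumption $\F_{\infty-}=\F_\infty$, whereas the paper simply asserts the $L^2$ convergence.
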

\begin{proof}
 We shall again use the convergence of square integrable martingales. For any $Y\in L^2(\F)$, let $Y_k = \bE[Y|\F_{T_k}]$. This is a martingale in the discrete filtration $\G_k=\F_{T_k}$, and so $Y_k \to Y$ in $L^2$. Hence for any $\epsilon>0$ there exists a $k$ such that $\bE[(Y-Y_k)^2]<\epsilon/4$. By Theorem \ref{thm:chaosfinitejump} there is also a sequence $\{g_n\}_{n=1}^k$ such that $\bE\left[\left(Y_k - \sum_{n=0}^k J^n_{T_k}(g_n)\right)^2\right]<\epsilon/4$. By the triangle inequality,
\[\bE\left[\left(Y- \sum_{i=1}^k J^n_{T_k}(g_n)\right)^2\right]<\epsilon.\]
\end{proof}

\section{Conclusions}
We have shown general conditions such that an  arbitrary marked point process generates a martingale random measure $m$ for which a martingale representation theorem holds, and such that $L^2(\F)$ admits a chaos decomposition. 

A key motivating application of this result is to allow a general development of Malliavin calculus for marked point processes. This development is done in \cite{Decreusefond1998}, under the assumption that a chaos decomposition exists.

The only assumptions that we have made on the processes in question is that the compensating measure $\tilde p^\alpha$ is equivalent to some deterministic measure $\zeta^\alpha$, which is continuous in time and finite for finite times. It seems reasonable that some relaxation of this assumption is possible, for example, to only assuming that $\tilde p^\alpha$ is absolutely continuous with respect to $\zeta^\alpha$. The difficulty in doing this arises as we cannot then write the quadratic variation of a martingale in terms of a product measure $\bP\times \zeta$, and therefore cannot directly show that iterated integrals of different orders are orthogonal.

It also seems reasonable that a relaxation of the assumption that there are finitely many jumps should be possible. In \cite{Elliott1976} no such assumption is made, however this leads to the need for transfinite induction in the proof of the martingale representation theorem. Having convergent sequences of jumps also requires a relaxation of the continuity of $\tilde p$ in time (which we assume through the equivalence of $\tilde p^\alpha$ and $\zeta^\alpha$, coupled with the continuity of $\zeta^\alpha$). This may also be possible, however it leads to a more complex quadratic variation for stochastic integrals, as it allows the possibility of accessible jump times.

\bibliographystyle{plain}
\bibliography{../RiskPapers/General}

\begin{thebibliography}{10}

\bibitem{Biane1990}
Philippe Biane.
\newblock Chaotic representation for finite markov chains.
\newblock {\em Stochastics and Stochastic Reports}, 30:61--68, 1990.

\bibitem{Davis1976}
Mark H.~A. Davis.
\newblock The representation of martingales of jump processes.
\newblock {\em SIAM Journal on Control \& Optimization}, 14:623--638, 1976.

\bibitem{Decreusefond1998}
Laurent Decreusefond.
\newblock Perturbation analysis and malliavin calculus.
\newblock {\em The Annals of Applied Probability}, 8(2):496--523, 1998.

\bibitem{Elliott1976}
Robert~J. Elliott.
\newblock Stochastic integrals for martingales of a jump process with partially
  accessible jump times.
\newblock {\em Z. Wahrscheinlichkeitstheorie verw. Gebiete}, 36:213--226, 1976.

\bibitem{Elliott1977}
Robert~J. Elliott.
\newblock Innovation projections of a jump process and local martingales.
\newblock {\em Math. Proc. Camb. Phil. Soc.}, 81:77--90, 1977.

\bibitem{Elliott1982}
Robert~J. Elliott.
\newblock {\em Stochastic Calculus and its Applications}.
\newblock Springer Verlag, Berlin-Heidelberg-New York, 1982.

\bibitem{Emery1989}
M.~Emery.
\newblock On the az\'ema martingales.
\newblock In Jacques Az\'ema, Marc Yor, and Paul Meyer, editors, {\em
  S\'eminaire de Probabilit\'es XXIII}, volume 1372 of {\em Lecture Notes in
  Mathematics}, pages 66--87. Springer Berlin / Heidelberg, 1989.

\bibitem{Kroeker1980}
John~P. Kroeker.
\newblock Wiener analysis of functionals of a markov chain: Application to
  neural transformations of random signals.
\newblock {\em Biological Cybernetics}, 36:243--248, 1980.

\bibitem{Privault2002}
Nicolas Privault and Wim Schoutens.
\newblock Discrete chaotic calculus and covariance identities.
\newblock {\em Stochastics and Stochastics Reports}, 72:289--316, 2002.

\bibitem{Wiener1938}
Norbert Wiener.
\newblock The homogeneous chaos.
\newblock {\em American Journal of Mathematics}, 60(4):pp. 897--936, 1938.

\end{thebibliography}
\end{document}